\documentclass[11pt]{article}

\usepackage[margin=1.15in]{geometry}
\usepackage{amsmath,amssymb,amsthm,mathtools}
\usepackage{enumitem}
\usepackage[hidelinks]{hyperref}
\usepackage{microtype}

\newtheorem{theorem}{Theorem}[section]
\newtheorem{lemma}[theorem]{Lemma}
\newtheorem{corollary}[theorem]{Corollary}
\theoremstyle{remark}
\newtheorem{remark}[theorem]{Remark}

\usepackage{xcolor}

\newcommand{\C}{\mathbb C}
\newcommand{\D}{\mathbb D}
\newcommand{\T}{\mathbb T}
\newcommand{\B}{\mathcal B}

\title{Dilation Theory for Scaled $q$-Numerical Ranges}
\author{
Matias Kaunisto\thanks{
Department of Physics and Mathematics, University of Eastern Finland,
P.O. Box 111, FI-80101 Joensuu, Finland.
Email: \texttt{matias.kaunisto@gmail.com}
}
\and
Ryan O'Loughlin\thanks{
Department of Mathematics and Statistics, University of Reading,
Reading, England.
Email: \texttt{r.d.oloughlin@reading.ac.uk}
}
\and
Jani Virtanen\thanks{
Department of Physics and Mathematics, University of Eastern Finland,
80100 Joensuu, Finland;
Department of Mathematics and Statistics, University of Reading,
Whiteknights, Reading RG6 6AX, United Kingdom;
Department of Mathematics and Statistics, University of Helsinki,
P.O. Box 68, FI-00014 University of Helsinki, Finland.
Email: \texttt{jani.virtanen@uef.fi}
}
}
\date{}

\begin{document}
\maketitle

\begin{abstract}
We generalise Berger's dilation theorem from numerical ranges to scaled $q$-numerical ranges. Specifically, we prove that if the scaled $q$-numerical range $\frac{W_q(A)}{q}$ is contained in the closed unit disk, then $A$ admits a unitary $\rho_q$-dilation, where
\[
\rho_q=\max\left\{1,\frac{2|q|}{1+\sqrt{1-|q|^2}}\right\}.
\]
As a corollary we show that every disk containing $\Omega_q(A)$ is a complete $\rho_q$-spectral set for $A$. At $q=1$, this recovers the classical fact that any disk containing $W(A)$ is a complete $2$-spectral set for $A$.
\end{abstract}

\noindent\textbf{Keywords:} $q$-numerical range, complete spectral set, unitary dilation, $\rho$-contraction, Crouzeix's conjecture.

\medskip
\noindent\textbf{MSC (2020):} 47A12, 47A20, 47A25.

\section{Introduction}\label{sec:introduction}

Let $A$ be a bounded operator on a complex Hilbert space $H$ with
$\dim H\geq2$, and let $q\in\C$ with $0<|q|\leq1$. The $q$-numerical range and scaled $q$-numerical range are, respectively,
\[
W_q(A)=\{\langle Ax,y\rangle:\|x\|=\|y\|=1,\ \langle x,y\rangle=q\},
\qquad
\Omega_q(A)=\frac{W_q(A)}{q}.
\]
The $q$-numerical range and its basic properties have been studied extensively (see, for example, \cite{li1994generalized,tsing1984constrained,gau2021numerical}). When
$q=1$, we recover the usual numerical range $W(A)$.

A classical theorem of Berger \cite{berger1965strange} states that
\[
W(A)\subseteq\overline{\mathbb D}
\]
if and only if $A$ admits a unitary $2$-dilation. Equivalently, operators
of numerical radius at most one are precisely the operators of class
$C_2$. Despite the classical importance of this result, comparatively
few direct geometric generalisations of Berger's theorem have appeared.
A recent exception is the work of Agler, Lykova and Young
\cite{agler2024ellipse}, who obtained an analogous dilation
characterisation for operators whose numerical range is contained in an
ellipse. Such dilation results are closely connected with functional
calculus estimates and, in particular, with the theory of spectral sets.

If a compact set $\Omega\subseteq\C$ contains the spectrum of $A$ and
$\|f(A)\|\leq K\sup_{z\in\Omega}|f(z)|$ for every rational function $f$ with poles off $\Omega$, then $\Omega$ is called a $K$-spectral set for $A$. It is a complete $K$-spectral set if, for every $m\geq1$ and every $M_m(\C)$-valued rational function $F=(f_{ij})$ with poles off $\Omega$,
\[
\|F(A)\|\leq K\sup_{z\in\Omega}\|F(z)\|,
\]
where $F(A)=(f_{ij}(A))$ acts on $H^m$. Clearly, taking $m=1$ shows every complete spectral set is a spectral set.

Crouzeix conjectured in \cite{CC2matrix} that the numerical range is a $2$-spectral set. He subsequently proved the numerical range is a complete $11.08$-spectral set in \cite{CC11.08}, and Crouzeix and Palencia reduced this to $1+\sqrt2$ in \cite{1root2}. The (non-complete) Crouzeix conjecture was recently proved by Jin \cite{jin2026} and by Lorist and Schwenninger \cite{loristschwenninger2026}. Crouzeix's conjecture and related problems have attracted considerable
further attention in functional and complex analysis communities \cite{o2024crouzeix,CC11.08,CC2matrix,CC3by3,CCgorkin,CCincreasing,CCLi,CCpalenciaextension,CCnilpotent,CCnum,malman2024double,1root2,mashreghi2025schwarz}. Recently, the complete Crouzeix conjecture was proved for matrices of order at most three in \cite{ahagczyzperalavirtanen2026}, but the general complete Crouzeix conjecture remains open.

A scaled $q$-numerical range analogue of Crouzeix's conjecture was proposed in
\cite{oloughlinrani2026}, with the coefficient
\begin{equation}\label{eq:rhoq}
\rho_q=\max\left\{1,\frac{2|q|}{1+\sqrt{1-|q|^2}}\right\}.
\end{equation}
More precisely, it was conjectured that for every matrix $A$, every
$0<|q|\leq1$, and every polynomial $p$,
\begin{equation}\label{conj}
\|p(A)\|
\leq
\rho_q\sup_{z\in\Omega_q(A)}|p(z)|.
\end{equation}
Since $\Omega_q(A)$ is convex, Mergelyan's theorem shows that \eqref{conj} holding for polynomials is equivalent to requiring it for rational functions with poles off $\overline{\Omega_q(A)}$, and hence \eqref{conj} is equivalent to $\Omega_q(A)$ being a $\rho_q$-spectral set for $A$. The conjecture \eqref{conj} was subsequently proved independently in
\cite{aouichaouioloughlin2026} and \cite{jin2026}. The recent work \cite{ahagczyzperalavirtanen2026} proves the corresponding sharp complete result for matrices of order two and three and shows, in every finite dimension, that $\Omega_q(A)$ is a complete spectral set whenever $|q|\leq\frac{1}{\sqrt2}$.


The main result of the present paper is a generalisation of Berger's 1965 numerical range dilation theorem \cite{berger1965strange} to scaled $q$-numerical ranges. In contrast to \cite{agler2024ellipse} our result instead retains the containing set as a disk and replaces the numerical range by the scaled $q$-numerical range. As a secondary consequence of this dilation result, we settle the circular case of the complete scaled $q$-numerical range conjecture for bounded operators on arbitrary Hilbert spaces.

\section{The dilation result}\label{sec:dilation}

We write $\D=\{z\in\C:|z|<1\}$, $\overline{\D}$ for its closure and $\T=\partial\D$. For $\rho\geq1$, a bounded operator $A$ on a Hilbert space $H$ belongs to the class $C_\rho$ if there is a Hilbert space $\mathcal{K}$ with $H \subseteq   \mathcal K$ and a unitary operator $U\in\B(\mathcal K)$ such that
\begin{equation}\label{eq:rhodilation}
A^m=\rho P_HU^m|_H,\qquad m=1,2,\ldots,
\end{equation}
where $P_H$ is the orthogonal projection of $\mathcal K$ onto $H$; see \cite{sznagyfoias1966}. In this case, $U$ is called a unitary $\rho$-dilation of $A$. For $1\leq\rho<2$, we will use the equivalent characterization
\begin{equation}\label{eq:rhocharacterization}
A\in C_\rho
\quad\Longleftrightarrow\quad
\|(2-\rho)A+(\rho-1)\zeta I\|\leq1
\quad(\zeta\in\T),
\end{equation}
which follows from the standard positivity criterion for the classes $C_\rho$ \cite{sznagyfoias1966}.

\begin{lemma}\label{lem:coeff}
Let $0<q<1$ and suppose that $\Omega_q(A)\subseteq\overline\D$. For every pair of unit vectors $x,y\in H$,
\begin{equation}\label{eq:coeffbound}
\begin{aligned}
|\langle Ax,y\rangle|
&\leq
\frac{q}{1+\sqrt{1-q^2}}
\left(1+\sqrt{1-|\langle x,y\rangle|^2}\right)
&&\text{if }|\langle x,y\rangle|\leq q,\\
|\langle Ax,y\rangle|
&\leq |\langle x,y\rangle|
&&\text{if }|\langle x,y\rangle|\geq q.
\end{aligned}
\end{equation}
\end{lemma}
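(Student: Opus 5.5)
The plan is to reduce everything to the hypothesis $\Omega_q(A)\subseteq\overline\D$ in a rotation-invariant form, and then to a two-dimensional extremal problem. First I would note that for $|\omega|=1$ we have $W_{\omega q}(A)=\omega W_q(A)$ (replace $y$ by $\bar\omega y$). The hypothesis therefore reads: $|\langle Au,v\rangle|\le q$ for all unit $u,v$ with $|\langle u,v\rangle|=q$. Fix unit $x,y$ with $c:=|\langle x,y\rangle|$, and set $t=\sqrt{1-c^2}$ and $s=\sqrt{1-q^2}$. Write $y=\gamma x+t\,w$ with $|\gamma|=c$ and $w\perp x$ a unit vector (if $c=1$, take $w$ arbitrary). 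Put $a=\langle Ax,x\rangle$ and $b=\langle Ax,w\rangle$. Then
\[
\langle Ax,y\rangle=\bar\gamma a+t\,b ,\qquad\text{so}\qquad |\langle Ax,y\rangle|\le c|a|+t|b| .
\]

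The second step extracts constraints on $(|a|,|b|)$ from the hypothesis. Test against $y_\theta=\gamma_\theta x+s e^{i\theta}w$ with $|\gamma_\theta|=q$, and choose the phases of $\gamma_\theta$ and $\theta$ so that both terms align. This gives $q|a|+s|b|\le q$. For $c\ge q$ this already suffices. The function $c|a|+t|b|$ is linear, so on the triangle $\{q|a|+s|b|\le q\}$ its maximum is attained at a vertex: either $(1,0)$, giving $c$, or $(0,q/s)$, giving $qt/s\le q\le c$, since $t\le s$. This proves the second line of \eqref{eq:coeffbound}.

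For $c<q$ this argument only gives $\max\{c,qt/s\}$. That is too weak: at $c=0$ it gives $q/s$ rather than $2q/(1+s)$. This is where the real work lies, and I expect it to be the main obstacle. Testing only against pairs whose first vector is $x$ is not enough; one must also move the first vector. I would compress $A$ to the two-dimensional space $\operatorname{span}\{x,y\}$. This is legitimate because $W_q$ of a compression is contained in $W_q(A)$. I would then test the hypothesis on pairs
\[
u=\alpha x+\beta e^{i\theta}w,\qquad v=\alpha' x+\beta' e^{i\varphi}w ,
\]
with $|\alpha|^2+|\beta|^2=|\alpha'|^2+|\beta'|^2=1$ and $\alpha\bar\alpha'+\beta\bar\beta'e^{i(\theta-\varphi)}$ of modulus $q$. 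The quantity $\langle Au,v\rangle$ is a combination of the four matrix entries $\langle Ax,x\rangle$, $\langle Aw,x\rangle$, $\langle Ax,w\rangle$ and $\langle Aw,w\rangle$. Two devices then isolate the combination $\bar\gamma a+tb$. The first is to average over the phases $\theta,\varphi$ while keeping $\theta-\varphi$ fixed, which keeps $\langle u,v\rangle$ constant. The second is to add the estimates for a symmetric pair of configurations (for example $u\leftrightarrow v$ roles, using $A$ and the conjugate-symmetric choice), so that the unwanted diagonal and cross terms cancel or are bounded by $q$.

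The target is an inequality of the form $\lambda\,|\bar\gamma a+tb|\le q\,\mu$, with $\lambda,\mu$ depending on the angles. Optimising the angles should produce exactly $\frac{q}{1+s}(1+t)$. Two sanity checks guide the choice of parameters. At $c=q$ the bound must equal $q$, and it does, since $t=s$ there. At $c=0$ the bound must equal $2q/(1+s)$, which matches the known sharp numerical-radius-type estimate for off-diagonal entries of a $2\times2$ matrix whose $q$-numerical range is an ellipse inside $q\overline\D$. If the averaging approach becomes unwieldy, my fallback is the elliptical range theorem for $W_q$ of $2\times2$ matrices. I would write the compression in the orthonormal basis $\{x,w\}$, express the containment of the ellipse in $q\overline\D$ through its foci and minor axis, and maximise $|\bar\gamma a+tb|$ over all such matrices directly.
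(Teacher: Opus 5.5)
Your proof of the case $|\langle x,y\rangle|\ge q$ is correct, and it takes a different route from the paper. Testing $x$ against $y_\theta=\gamma_\theta x+se^{i\theta}w$ gives $q|a|+s|b|\le q$, and maximising the linear function $c|a|+t|b|$ over this triangle gives $\max\{c,qt/s\}=c$. The paper instead imports the monotonicity $\Omega_t(A)\subseteq\Omega_r(A)$ for $0<r\le t\le1$ from the proof of \cite[Theorem~2.5]{li1994generalized}. Your vertex argument is a self-contained replacement for that citation.

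The genuine gap is the case $|\langle x,y\rangle|<q$. This is the substance of the lemma, and you leave it as a programme (``should produce exactly''). For $c>0$ the averaging device you describe cannot isolate $\langle Ax,y\rangle=\bar\gamma a+tb$. Rotate the $w$-components of $u$ and $v$ by a common phase $e^{i\psi}$. The $\psi$-independent part of $\langle Au,v\rangle$ is then $\alpha\bar\alpha'\langle Ax,x\rangle+\beta\bar\beta'e^{i(\theta-\varphi)}\langle Aw,w\rangle$. So $\langle Aw,w\rangle$ sits in the same Fourier mode as $a$, and it drops out only if $\beta\bar\beta'=0$. That either returns you to your weak test ($u\parallel x$) or removes the $b$-term ($v\parallel x$). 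At $c=0$ the device does work: weighting by $e^{i\psi}$ isolates $b$, and optimising the moduli gives exactly $2q/(1+s)$. For $c>0$ you would have to superpose configurations with different moduli so that the $\langle Aw,w\rangle$ contributions cancel, and you specify none. Nothing in the proposal shows the outcome is the sharp bound $\frac{q}{1+s}(1+t)$, and no loss is allowed. This bound is attained at $c=0$ by the scaled nilpotent matrix of Remark~\ref{rem:sharpdilation}. It is also used with $\frac{q}{1+s}=\frac{\rho_q}{2}$ in Theorem~\ref{thm:dilation} when $q>\frac45$. The elliptical-range fallback is a constrained optimisation that you also do not carry out.

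The missing idea is a family of admissible pairs depending \emph{holomorphically} on a parameter. The positive averaging you want is then supplied by the maximum modulus principle (equivalently, the Poisson integral over a circle). The paper's construction goes as follows.
\begin{enumerate}
\item Rotate $y$ so that $c=\langle x,y\rangle\ge0$. Set $e=(x+y)/\sqrt{2(1+c)}$, $f=(x-y)/\sqrt{2(1-c)}$, $X_z=(1+z)e+(1-z)f$ and $Y_z=(1+\bar z)e-(1-\bar z)f$.
\item Then $\|X_z\|^2=\|Y_z\|^2=2(1+|z|^2)$ and $\langle X_z,Y_z\rangle=4z$. So on the circle $|z|=r_q:=q/(1+s)$ the normalised pair has inner product of modulus $q$. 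The hypothesis gives $|\langle AX_z,Y_z\rangle|\le 2q(1+r_q^2)=4r_q$ there.
\item $F(z)=\frac14\langle AX_z,Y_z\rangle$ is a polynomial of degree at most two. Hence $|F(z_0)|\le r_q$ at $z_0=c/(1+t)\le r_q$.
\item At this point $X_{z_0}=\sqrt{2(1+z_0^2)}\,x$ and $Y_{z_0}=\sqrt{2(1+z_0^2)}\,y$. So $\frac{1+z_0^2}{2}|\langle Ax,y\rangle|\le r_q$, and $\frac{2}{1+z_0^2}=1+t$ turns this into exactly the first line of the lemma.
\end{enumerate}
At $c=0$ this family reduces to $X_z=\sqrt2(x+zy)$, $Y_z=\sqrt2(\bar zx+y)$, which is essentially a common-phase family of your type. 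For $c>0$ the moduli of the coefficients vary along the circle, and that is precisely what your fixed-moduli averaging lacks.
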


\begin{proof}
Multiplying $y$ by a unimodular scalar, we may assume that $\langle x,y\rangle\geq0$. This does not change either modulus in \eqref{eq:coeffbound}. If $\langle x,y\rangle\geq q$, then
\[
\frac{\langle Ax,y\rangle}{\langle x,y\rangle}
\in \Omega_{\langle x,y\rangle}(A)
\subseteq \Omega_q(A)
\subseteq \overline{\mathbb D},
\]
where the inclusion $\Omega_t(A)\subseteq\Omega_r(A)$ for $0<r\leq t\leq1$ follows from the proof of \cite[Theorem~2.5]{li1994generalized}, which extends straightforwardly from matrices to bounded operators. Hence
$
|\langle Ax,y\rangle|
\leq |\langle x,y\rangle|,
$
which proves the second estimate in \eqref{eq:coeffbound}. 

Suppose that $0\leq\langle x,y\rangle<q$. Set
\[
e=\frac{x+y}{\sqrt{2(1+\langle x,y\rangle)}},
\qquad
f=\frac{x-y}{\sqrt{2(1-\langle x,y\rangle)}},
\qquad
b=\frac{\langle x,y\rangle}{1+\sqrt{1-\langle x,y\rangle^2}}.
\]
Then $e$ and $f$ are orthonormal. For $z\in\C$, put
$$
X_z=(1+z)e+(1-z)f, \quad Y_z=(1+\overline z)e-(1-\overline z)f,$$
and define $F(z)=\frac{\langle AX_z,Y_z\rangle}{4}$. A direct computation gives
\begin{equation}\label{eq:XzYz}
\|X_z\|^2=\|Y_z\|^2=2(1+|z|^2),
\qquad
\langle X_z,Y_z\rangle=4z.
\end{equation}

If
$
|z|=\frac{q}{1+\sqrt{1-q^2}},
$
then using \eqref{eq:XzYz}, a computation gives
$$
\frac{|\langle X_z,Y_z\rangle|}
{\|X_z\|\|Y_z\|}
=q.
$$
Thus, since by assumption
$
W_q(A)\subseteq q\overline{\mathbb D},
$
we have
\[
\frac{|\langle AX_z,Y_z\rangle|}
{\|X_z\|\|Y_z\|}
\leq q.
\]
Hence, when
$
|z|=\frac{q}{1+\sqrt{1-q^2}},
$
\[
|F(z)|
\leq
\frac{q}{4}\|X_z\|\|Y_z\|.
\]
In this case a further direct computation from \eqref{eq:XzYz} yields $\frac{q}{4}\|X_z\|\|Y_z\|
=\frac{q}{1+\sqrt{1-q^2}}$. Noting $\frac{s}{1+\sqrt{1-s^2}}$ is increasing on $[0,1)$, shows $
b\leq \frac{q}{1+\sqrt{1-q^2}}
$
and clearly $F(z)$ is a polynomial, so applying the maximum-modulus principle on the disk of radius $\frac{q}{1+\sqrt{1-q^2}}$ yields \begin{equation}\label{maxmod}
    |F(b)|\leq \frac{q}{1+\sqrt{1-q^2}}.
\end{equation}
A straightforward computation from the definitions gives
\[
X_b=\sqrt{2(1+b^2)}\,x,
\qquad
Y_b=\sqrt{2(1+b^2)}\,y,
\]
and thus
\[
F(b)=\frac{1+b^2}{2}\langle Ax,y\rangle.
\]
Since
\[
\frac{2}{1+b^2}
=
1+\sqrt{1-|\langle x,y\rangle|^2},
\]
it follows from \eqref{maxmod} that
\[
|\langle Ax,y\rangle|
=
\left(1+\sqrt{1-|\langle x,y\rangle|^2}\right)|F(b)|
\leq
\frac{q}{1+\sqrt{1-q^2}}
\left(1+\sqrt{1-|\langle x,y\rangle|^2}\right).
\]
\end{proof}

The following theorem is the main dilation result of the paper. Berger's theorem corresponds to the endpoint $q=1$; the result below shows that this classical numerical range theorem forms part of a broader family of dilation results for scaled $q$-numerical ranges.

\begin{theorem}\label{thm:dilation}
Let $A\in\B(H)$ and let $0<|q|\leq1$. If $\Omega_q(A)\subseteq\overline\D$, then $A\in C_{\rho_q}$, where $\rho_q$ is given by \eqref{eq:rhoq}.
\end{theorem}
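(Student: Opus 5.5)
The plan is to reduce to real $q$, dispose of the endpoint $q=1$ by Berger's theorem, and for $0<q<1$ verify the norm criterion \eqref{eq:rhocharacterization} on matrix coefficients using Lemma~\ref{lem:coeff}.

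\emph{Reduction to real $q$.} If $q=|q|e^{i\theta}$, replace $y$ by $e^{-i\theta}y$ in the definition of $W_q(A)$. Both $\langle x,y\rangle$ and $\langle Ax,y\rangle$ are then multiplied by $e^{i\theta}$. Hence $W_q(A)=e^{i\theta}W_{|q|}(A)$ and $\Omega_q(A)=\Omega_{|q|}(A)$. Since $\rho_q$ depends only on $|q|$, we may assume $0<q\leq1$.

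\emph{The endpoint $q=1$.} Here $\Omega_1(A)=W(A)\subseteq\overline{\D}$ and $\rho_1=2$, so the conclusion is exactly Berger's theorem \cite{berger1965strange}.

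\emph{Setup for $0<q<1$.} Put $\rho=\rho_q$ and $c=\frac{q}{1+\sqrt{1-q^2}}$, so that $\rho=\max\{1,2c\}\in[1,2)$ and $c\leq\rho/2$. By \eqref{eq:rhocharacterization}, it suffices to show
\[
|(2-\rho)\langle Ax,y\rangle+(\rho-1)\zeta\langle x,y\rangle|\leq1
\]
for all unit vectors $x,y$ and all $\zeta\in\T$. Taking the supremum over $x,y$ gives the operator norm bound. Write $s=|\langle x,y\rangle|\in[0,1]$. By the triangle inequality, the left side is at most $(2-\rho)|\langle Ax,y\rangle|+(\rho-1)s$.

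\emph{Case $s\geq q$.} Lemma~\ref{lem:coeff} gives $|\langle Ax,y\rangle|\leq s$. The bound is then at most $(2-\rho)s+(\rho-1)s=s\leq1$.

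\emph{Case $s\leq q$.} Lemma~\ref{lem:coeff} gives $|\langle Ax,y\rangle|\leq c(1+\sqrt{1-s^2})\leq\frac{\rho}{2}(1+\sqrt{1-s^2})$. So it suffices to show
\[
g(s)=a\bigl(1+\sqrt{1-s^2}\bigr)+bs\leq1,
\qquad a=\tfrac{(2-\rho)\rho}{2},\quad b=\rho-1 .
\]
This must hold for all $s\in[0,1]$, not just at the endpoints, since $g$ is concave and may peak in the interior. Substitute $s=\sin\varphi$ with $\varphi\in[0,\pi/2]$. Then
\[
g=a+a\cos\varphi+b\sin\varphi\leq a+\sqrt{a^2+b^2}.
\]
Since $a\le 1/2$, the inequality $a+\sqrt{a^2+b^2}\leq1$ is equivalent to $b^2\leq1-2a$. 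Now $1-2a=1-2\rho+\rho^2=(\rho-1)^2=b^2$, so this holds with equality. Hence $g\leq1$ everywhere.

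When $\rho=1$ the argument simply yields $\|A\|\leq1$, which is the statement $A\in C_1$. This completes the case $0<q<1$.

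\emph{Expected obstacle.} The delicate step is the interior maximisation in the case $s\leq q$. The constant $\rho_q$ is exactly what makes $a+\sqrt{a^2+b^2}=1$ hold with equality, so no slack is lost there. The only further thing to check is that the crude replacement $c\leq\rho/2$ is harmless, which holds since $2c\leq\rho$ in both regimes $q\leq 4/5$ and $q\geq 4/5$.
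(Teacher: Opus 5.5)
Your proof is correct and follows the paper's argument step for step: the reduction to $q>0$, Berger's theorem at $q=1$, the criterion \eqref{eq:rhocharacterization}, the triangle inequality, the split at $|\langle x,y\rangle|=q$ via Lemma~\ref{lem:coeff}, and the bound $c\le\rho_q/2$; your substitution $s=\sin\varphi$ is just the paper's Cauchy--Schwarz step in trigonometric form. One small correction to your closing commentary: $a+\sqrt{a^2+b^2}=1$ holds for every $\rho\in[1,2]$ (because $1-2a=(\rho-1)^2$ identically), so $\rho_q$ enters the argument only through the requirement $2c\le\rho$, not through that equality.
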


\begin{proof}
Since $\Omega_q(A)=\Omega_{|q|}(A)$, we may assume that $q>0$.
If $q=1$, then $\Omega_1(A)=W(A)$ and the result is the classical
disk theorem of Berger \cite{berger1965strange}. So we assume throughout that $0<q<1$, which means $1\leq\rho_q<2$, and consequently by \eqref{eq:rhocharacterization} it is enough to prove
\begin{equation}\label{eq:targetnorm}
\|(2-\rho_q)A+(\rho_q-1)\zeta I\|\leq1,
\qquad \zeta\in\T.
\end{equation}
Fix $\zeta\in\T$ and unit vectors $x,y\in H$. The triangle inequality gives
\begin{equation}\label{eq:triangle}
\left|\left\langle\big((2-\rho_q)A+(\rho_q-1)\zeta I\big)x,y\right\rangle\right|
\leq
(2-\rho_q)|\langle Ax,y\rangle|+(\rho_q-1)|\langle x,y\rangle|.
\end{equation}
If $|\langle x,y\rangle|\geq q$, Lemma~\ref{lem:coeff} then gives
\[
\begin{aligned}
(2-\rho_q)|\langle Ax,y\rangle|
+(\rho_q-1)|\langle x,y\rangle|
&\leq
(2-\rho_q)|\langle x,y\rangle|
+(\rho_q-1)|\langle x,y\rangle|\\
&=
|\langle x,y\rangle|\\
&\leq 1.
\end{aligned}
\]

Suppose that $|\langle x,y\rangle|\leq q$. Since
$$
\frac{q}{1+\sqrt{1-q^2}}\leq\frac{\rho_q}{2},
$$
Lemma~\ref{lem:coeff} bounds the right-hand side of \eqref{eq:triangle} by
\begin{equation}\label{EQCS}
    \frac{\rho_q(2-\rho_q)}{2}
    \left(1+\sqrt{1-|\langle x,y\rangle|^2}\right)
    +(\rho_q-1)|\langle x,y\rangle|.
\end{equation}
Applying Cauchy--Schwarz to the vectors
\[
\left(
\frac{\rho_q(2-\rho_q)}{2},
\,\rho_q-1
\right)
\quad\text{and}\quad
\left(
\sqrt{1-|\langle x,y\rangle|^2},
\,|\langle x,y\rangle|
\right)
\]
shows the expression in \eqref{EQCS} is bounded above by
\[
\frac{\rho_q(2-\rho_q)}{2}
+
\sqrt{
\frac{\rho_q^2(2-\rho_q)^2}{4}
+(\rho_q-1)^2
}.
\]
Then noting $ \rho_q(2-\rho_q)=1-(\rho_q-1)^2$, we have
\[
\sqrt{
\frac{\rho_q^2(2-\rho_q)^2}{4}
+(\rho_q-1)^2
}
=
\frac{1+(\rho_q-1)^2}{2}.
\]
Therefore
\[
\frac{\rho_q(2-\rho_q)}{2}
+
\sqrt{
\frac{\rho_q^2(2-\rho_q)^2}{4}
+(\rho_q-1)^2
}
=
\frac{1-(\rho_q-1)^2}{2}
+
\frac{1+(\rho_q-1)^2}{2}
=1.
\]
Hence the expression in \eqref{EQCS} is at most $1$ and therefore
\[
\left|
\left\langle
\big((2-\rho_q)A+(\rho_q-1)\zeta I\big)x,y
\right\rangle
\right|
\leq 1
\]
for all unit vectors $x,y\in H$ so taking the supremum over unit $x$ and $y$ shows \eqref{eq:targetnorm} holds.
\end{proof}

\begin{remark}\label{rem:affine}
The relation $\Omega_q(\alpha A+\beta I)=\alpha\Omega_q(A)+\beta$ shows immediately that if $\Omega_q(A)\subseteq c+R\overline\D$, with $R>0$, then $\frac{A-cI}{R}\in C_{\rho_q}$.
\end{remark}

\begin{remark}\label{rem:sharpdilation}
The coefficient in Theorem~\ref{thm:dilation} is sharp. For $\frac{4}{5}<q\leq1$, let
\[
A=\frac{2q}{1+\sqrt{1-q^2}}
\begin{pmatrix}0&1\\0&0\end{pmatrix}.
\]
A direct $2\times2$ calculation gives
$$
W_q\left(\begin{pmatrix}0&1\\0&0\end{pmatrix}\right)=\frac{1+\sqrt{1-q^2}}{2}\overline\D,$$
and hence $\Omega_q(A)=\overline\D$ and $\|A\|=\rho_q$. Any unitary $\rho$-dilation implies $\|A\|\leq\rho$ by taking $m=1$ in \eqref{eq:rhodilation}, so no smaller coefficient is possible. For $0<q\leq\frac{4}{5}$, we have $\rho_q=1$, which is already the least value allowed in the classes $C_\rho$.
\end{remark}

\section{Complete spectral sets}\label{sec:complete}

The dilation result has an immediate complete spectral-set consequence. In the classical case, when $q=1$, it is known that when the closure of the numerical range is a disk it is a complete $2$-spectral set \cite{okubo1975constants,davidsonpaulsenwoerdeman}. The following result extends this circular numerical range result to scaled $q$-numerical ranges, using a standard argument from the theory of $\rho$-contractions.

\begin{corollary}\label{cor:complete}
Let $A\in\B(H)$, let $0<|q|\leq1$ and let $\Omega_q(A)\subseteq\overline\D$. Then the closed unit disk is a complete $\rho_q$-spectral set for $A$. 
\end{corollary}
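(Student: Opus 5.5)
By Theorem~\ref{thm:dilation}, $A\in C_{\rho_q}$. So there is a Hilbert space $\mathcal K\supseteq H$ and a unitary $U\in\B(\mathcal K)$ with $A^m=\rho_q P_HU^m|_H$ for all $m\geq1$. The plan is to turn this power dilation into a completely bounded estimate for the polynomial functional calculus, and then pass to rational functions. I write $\rho=\rho_q$ throughout.

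\emph{Step 1: the spectrum.} Since $A\in C_\rho$, the spectral radius of $A$ is at most $1$: we have $\|A^m\|\leq\rho$ for all $m$, so $r(A)=\lim\|A^m\|^{1/m}\leq1$. Hence $\sigma(A)\subseteq\overline\D$, and $\overline\D$ is an admissible candidate for a spectral set.

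\emph{Step 2: matrix-valued polynomials.} Let $F(z)=\sum_{k=0}^N C_k z^k$ with $C_k\in M_m(\C)$. Using the dilation identity for $k\geq1$ and the term $C_0\otimes I$ for $k=0$, I obtain on $H^m$
\[
F(A)=\rho\,(I_m\otimes P_H)\,F(U)\,(I_m\otimes P_H)\big|_{H^m}-(\rho-1)\,C_0\otimes I_H .
\]
Here $F(U)=\sum_k C_k\otimes U^k$. The first term has norm at most $\rho\|F(U)\|\leq\rho\sup_{\T}\|F\|$, by the unitary spectral theorem (equivalently, the von Neumann inequality for the normal operator $I_m\otimes U$). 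The constant term alone cannot be bounded by $\rho-1$ times the sup norm, so this naive split only gives $2\rho-1$.

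\emph{Step 3: the sharp bound $\rho$.} This is the main obstacle. I would use the standard positivity/Herglotz argument of Sz.-Nagy--Foias for $C_\rho$. Membership in $C_\rho$ is equivalent to
\[
\operatorname{Re}\big((1-\tfrac{2}{\rho})I+\tfrac{2}{\rho}(I-zA)^{-1}\big)\geq0\quad(|z|<1).
\]
Equivalently, the map $\Phi(p)=\rho P_H p(U)|_H-(\rho-1)p(0)I$, defined first on polynomials in $z$ and $\bar z$ on $\T$ and extended by $\Phi(\bar z^k)=\rho P_HU^{*k}|_H=A^{*k}$, is a completely bounded map on $C(\T)$. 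Its norm is controlled because $\Phi$ is $\rho\cdot$(a unital completely positive map) minus $(\rho-1)\cdot$(evaluation at $0$).

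The cleanest route I know is Okubo--Ando. Any $T\in C_\rho$ factors as $T=\rho\, S^{-1}V S$ up to compression, with $V$ a contraction and $\|S\|\|S^{-1}\|\leq\rho$. This gives $\|F(A)\|\leq\|S^{-1}\|\|S\|\,\|F(V)\|\leq\rho\sup_{\overline\D}\|F\|$, where the last step uses the matricial von Neumann inequality for contractions. Concretely, Okubo--Ando shows that $A\in C_\rho$ iff $A=S^{-1}TS$ with $T$ a contraction and $\|S\|\|S^{-1}\|\leq\rho$. I would invoke this as the ``standard argument'' and cite \cite{okubo1975constants}.

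\emph{Step 4: rational functions.} A rational $F$ with poles off $\overline\D$ is analytic on a larger disk. Its Taylor polynomials therefore converge uniformly on $\overline\D$ to $F$, and at the same time $p_N(A)\to F(A)$ in norm, by Step 1 together with the Riesz--Dunford calculus. Passing to the limit gives $\|F(A)\|\leq\rho_q\sup_{\overline\D}\|F\|$ for all $m$, which is the complete $\rho_q$-spectral set property.
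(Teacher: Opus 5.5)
Your argument is correct and is essentially the paper's proof: Theorem~\ref{thm:dilation} gives $A\in C_{\rho_q}$, the Okubo--Ando theorem \cite{okubo1975constants} gives $A=STS^{-1}$ with $\|T\|\leq1$ and $\|S\|\|S^{-1}\|\leq\rho_q$, and the matricial von Neumann inequality finishes the proof; your Steps~1 and~4 just make explicit the passage from polynomials to rational $F$, which the paper absorbs into the complete von Neumann inequality for $T$. The remaining material is unused and partly inaccurate, so it should be cut: Step~2 and the $\Phi$ paragraph only give $2\rho-1$, the formula $T=\rho\,S^{-1}VS$ ``up to compression'' is garbled, and Okubo--Ando is only a one-way implication, not an equivalence (for small $s\neq0$, $\begin{pmatrix}1&2s\\0&-1\end{pmatrix}$ equals $S^{-1}\mathrm{diag}(1,-1)S$ with $S=\begin{pmatrix}1&s\\0&1\end{pmatrix}$ and $\|S\|\|S^{-1}\|\leq2$, yet its numerical radius is $\sqrt{1+s^2}>1$, so it is not in $C_2$).
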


\begin{proof}
By Theorem~\ref{thm:dilation}, $A\in C_{\rho_q}$ and so by
\cite[Theorem~2]{okubo1975constants}, there exists a bounded invertible
operator $S$ such that
\[
\|S^{-1}AS\|\leq1
\qquad\text{and}\qquad
\|S\|\|S^{-1}\|\leq\rho_q.
\]
Set $T=S^{-1}AS$ and let $F$ be an $M_m(\mathbb C)$-valued rational function
with poles off $\overline{\mathbb D}$, so
\begin{equation}\label{eq:similarityF}
F(A)=(I_m\otimes S)F(T)(I_m\otimes S^{-1}).
\end{equation}
Since $\|T\|\leq1$, the complete von Neumann inequality
\cite{paulsen2002completely} gives
\[
\|F(T)\|\leq\sup_{|z|\leq1}\|F(z)\|.
\]
Combining this with \eqref{eq:similarityF} and using
$\|I_m\otimes S\|=\|S\|$ and
$\|I_m\otimes S^{-1}\|=\|S^{-1}\|$, we obtain
\begin{equation}\label{eq:completebound}
\|F(A)\|
\leq
\|S\|\|S^{-1}\|\,\|F(T)\|
\leq
\rho_q\sup_{|z|\leq1}\|F(z)\|.
\end{equation}
Thus $\overline{\mathbb D}$ is a complete $\rho_q$-spectral set for $A$.
\end{proof}

\begin{remark}\label{rem:circular}
If $\overline{\Omega_q(A)}=c+R\overline\D$, then Corollary~\ref{cor:complete} shows that the closure of the scaled $q$-numerical range is a complete $\rho_q$-spectral set for $A$. The coefficient is sharp: for $\frac{4}{5}<|q|\leq1$, the $2\times2$ operator in Remark~\ref{rem:sharpdilation}, together with the scalar polynomial $p(z)=z$, forces the constant $\rho_q$, while for $0<|q|\leq\frac{4}{5}$ the constant polynomial $p(z)=1$ shows that no constant below $1$ is possible.
\end{remark}

\begin{remark}
In finite dimensions, Corollary~\ref{cor:complete} can also be obtained from the similarity results in \cite[Theorem~2.2 and Corollary~2.3]{aouichaouioloughlin2026}, together with \cite[Proposition~9.1]{ahagczyzperalavirtanen2026}.
\end{remark}

\section*{Acknowledgements}

R. O'Loughlin and J. Virtanen were supported by EPSRC grant EP/Y008375/1.

\noindent Part of this work was carried out as part of M. Kaunisto's
Master's thesis at the University of Eastern Finland.

\bibliographystyle{plain}
\bibliography{bibliography}

\end{document}